\documentclass[letterpaper,10pt]{article}

\usepackage[letterpaper,left=1in,right=1in,top=1.5in,bottom=1.5in]{geometry}
\newcommand{\myauthor}{Benjamin Antieau}
\newcommand{\mytitle}{Periodic cyclic homology and derived de Rham cohomology}

\title{\mytitle}

\author{Benjamin Antieau}
\date{\today}
 
\usepackage[pdfstartview=FitH,
            pdfauthor={\myauthor},
            pdftitle={\mytitle},
            colorlinks,
            linkcolor=reference,
            citecolor=citation,
            urlcolor=e-mail,
            ]{hyperref}


\usepackage{amsmath}
\usepackage{amscd}
\usepackage{amsbsy}
\usepackage{amssymb}
\usepackage{verbatim}
\usepackage{eufrak}
\usepackage{eucal}
\usepackage{microtype}
\usepackage{hyperref}
\usepackage{mathrsfs}
\usepackage{amsthm}
\usepackage{stmaryrd}
\pagestyle{headings}
\usepackage[all,cmtip]{xy}
\usepackage{tikz}
\usetikzlibrary{matrix,arrows}
\usepackage[abbrev,lite,alphabetic]{amsrefs}
\usepackage{dsfont}

\usepackage{fancyhdr}
\pagestyle{fancy}

\fancyhead{}
\fancyhead[LO,R]{\bfseries\footnotesize\thepage}
\fancyhead[LE]{\bfseries\footnotesize\rightmark}
\fancyhead[RO]{\bfseries\footnotesize\rightmark}
\chead[]{}
\cfoot[]{}
\setlength{\headheight}{1cm}

\usepackage{makeidx}
\makeindex

\usepackage{color}
\definecolor{todo}{rgb}{1,0,0}
\definecolor{conditional}{rgb}{0,1,0}
\definecolor{e-mail}{rgb}{0,.40,.80}
\definecolor{reference}{rgb}{.20,.60,.22}
\definecolor{mrnumber}{rgb}{.80,.40,0}
\definecolor{citation}{rgb}{0,.40,.80}


\setlength{\marginparwidth}{1.2in}
\let\oldmarginpar\marginpar
\renewcommand\marginpar[1]{\-\oldmarginpar[\raggedleft\footnotesize #1]%
{\raggedright\footnotesize #1}}


\newcommand{\Cscr}{\mathcal{C}}


\newcommand{\B}{\mathrm{B}}
\newcommand{\C}{\mathrm{C}}
\newcommand{\D}{\mathrm{D}}
\newcommand{\E}{\mathrm{E}}
\newcommand{\F}{\mathrm{F}}
\renewcommand{\H}{\mathrm{H}}

\renewcommand{\L}{\mathrm{L}}

\renewcommand{\P}{\mathrm{P}}
\newcommand{\R}{\mathrm{R}}

\newcommand{\W}{\mathrm{W}}


\newcommand{\CC}{\mathds{C}}
\newcommand{\EE}{\mathds{E}}

\newcommand{\NN}{\mathds{N}}
\newcommand{\PP}{\mathds{P}}
\newcommand{\QQ}{\mathds{Q}}

\newcommand{\ZZ}{\mathds{Z}}


\newcommand{\gr}{\mathrm{gr}}

\newcommand{\crys}{\mathrm{crys}}

\newcommand{\op}{\mathrm{op}}



\newcommand{\heart}{\heartsuit}

\newcommand{\fib}{\mathrm{fib}}

\renewcommand{\geq}{\geqslant}
\renewcommand{\leq}{\leqslant}





\newcommand{\HC}{\mathrm{HC}}

\newcommand{\HP}{\mathrm{HP}}
\newcommand{\TP}{\mathrm{TP}}
\newcommand{\TC}{\mathrm{TC}}

\newcommand{\HH}{\mathrm{HH}}




\DeclareMathOperator{\Fun}{Fun}



\newcommand{\Ch}{\mathrm{Ch}}

\newcommand{\sCAlg}{\mathrm{sCAlg}}
\newcommand{\CAlg}{\mathrm{CAlg}}








\DeclareMathOperator*{\colim}{colim}



\DeclareMathOperator{\Spec}{Spec}



\newcommand{\we}{\simeq}
\newcommand{\iso}{\cong}

\theoremstyle{plain}
\newtheorem{theorem}{Theorem}[section]
\newtheorem*{theorem*}{Theorem}
\newtheorem{lemma}[theorem]{Lemma}

\newtheorem{corollary}[theorem]{Corollary}
\newtheorem*{corollary*}{Corollary}

\theoremstyle{plain}

\theoremstyle{definition}

\newtheoremstyle{named}{}{}{\itshape}{}{\bfseries}{.}{.5em}{#1 \thmnote{#3}}
\theoremstyle{named}

\theoremstyle{definition}
\newtheorem{definition}[theorem]{Definition}

\newtheorem{example}[theorem]{Example}
\newtheorem*{example*}{Example}

\newtheorem*{question*}{Question}

\newtheorem{remark}[theorem]{Remark}

\begin{document}

\maketitle

\begin{abstract}
    \noindent
    We use the Beilinson $t$-structure on filtered complexes
    and the Hochschild--Kostant--Rosenberg theorem to
    construct filtrations on the negative cyclic and periodic cyclic homologies of a scheme $X$ with
    graded pieces given by the Hodge-completion of the derived de Rham
    cohomology of $X$. Such filtrations have
    previously been constructed by
    Loday in characteristic zero and by Bhatt--Morrow--Scholze for $p$-complete
    negative cyclic and periodic cyclic homology in the quasisyntomic case.

    \paragraph{Key Words.} Negative cyclic homology, periodic cyclic homology,
    derived de Rham cohomology, $t$-structures, filtered complexes.

    \paragraph{Mathematics Subject Classification 2010.}
    \href{http://www.ams.org/mathscinet/msc/msc2010.html?t=13Dxx&btn=Current}{13D03}
    --
    \href{http://www.ams.org/mathscinet/msc/msc2010.html?t=13Dxx&btn=Current}{14F40}.
\end{abstract}


\section{Introduction}

\newcommand{\BMS}{\mathrm{BMS}}
\newcommand{\DF}{\mathrm{DF}}
\newcommand{\HKR}{\mathrm{HKR}}
\newcommand{\dHH}{\mathrm{dHH}}
\newcommand{\dHC}{\mathrm{dHC}}
\newcommand{\dHP}{\mathrm{dHP}}
\newcommand{\DBF}{\mathrm{DBF}}
\newcommand{\CW}{\mathrm{CW}}

Let $k$ be a quasisyntomic ring and let $k\rightarrow R$ be a
quasisyntomic $k$-algebra. Bhatt, Morrow, and Scholze construct in~\cite{bms2}*{Theorem~1.17} a
functorial complete exhaustive decreasing multiplicative $\ZZ$-indexed filtration
$\F^\star_{\BMS}\HP(R/k;\ZZ_p)$ on the $p$-adic completion
$\HP(R/k;\ZZ_p)$ of periodic cyclic homology with graded pieces
$\gr_{\BMS}^n\HP(-/k;\ZZ_p)\we\overline{\L\Omega}_{R/k}[2n]$, where $\L\Omega_{R/k}$ is the derived de Rham
complex and $\overline{\L\Omega}_{R/k}$ is the $p$-adic completion
of the Hodge-completion of this complex. The Hodge filtration $\Omega^{\geq
n}_{R/k}$ for smooth algebras induces a Hodge filtration $\L\Omega_{R/k}^{\geq
n}$ on the derived de Rham complex and its completed variants.
There is a corresponding filtration on
negative cyclic homology, with graded pieces given by
$\overline{\L\Omega}_{R/k}^{\geq n}[2n]$, the $p$-completion of the
Hodge-completion of $\L\Omega_{R/k}^{\geq n}[2n]$.

For smooth $\QQ$-algebras, a similar statement goes back to
Loday~\cite{loday}*{5.1.12}. One can also derive very general results along
these lines in characteristic zero from~\cite{toen-vezzosi-s1}.
The authors of~\cite{bms2} suggest that such a filtration should exist outside the
$p$-complete setting. In this note, we use the Beilinson $t$-structure on filtered
complexes~\cite{beilinson-perverse} to prove that this is indeed the case.

\begin{theorem}\label{thm:intro}
    If $k$ is a commutative ring and $X$ is a quasi-compact quasi-separated $k$-scheme, then there are
    functorial complete decreasing multiplicative $\ZZ$-indexed filtrations
    $\F^\star_\B\HC^-(X/k)$ and $\F^\star_\B\HP(X/k)$ on negative cyclic
    homology and periodic cyclic homology, respectively. These filtrations
    satisfy the following properties.
    \begin{enumerate}
        \item[{\rm (a)}] There are natural equivalences
            \begin{align*}
                \gr^n_\B\HC^-(X/k)&\we\R\Gamma(X,\widehat{\L\Omega}^{\geq
            n}_{-/k}[2n]),\\
            \gr^n_\B\HP(X/k)&\we\R\Gamma(X,\widehat{\L\Omega}_{-/k}[2n]),
            \end{align*}
            where $\widehat{\L\Omega}_{-/k}$ is the Hodge-completion of the derived
            de Rham complex and $\widehat{\L\Omega}^{\geq n}_{-/k}$ is $n$th term in
            the Hodge filtration on $\widehat{\L\Omega}_{-/k}$.
        \item[{\rm (b)}] The filtered pieces $\F^n_\B\HC^-(X/k)$ and
            $\F^n_\B\HP(X/k)$ are equipped with compatible decreasing
            filtrations which induce the Hodge filtration on
            $\gr^n_\B\HC^-(X/k)$ and $\gr^n_\B\HP(X/k)$
            under the equivalences of part (a).
        \item[{\rm (c)}] If $X/k$ is quasi-lci,\footnote{We say that a $k$-scheme $X$ is {\bf quasi-lci} if $\L_{X/k}$ has Tor-amplitude
            contained in $[0,1]$.} then the filtrations
            $\F^\star_\B\HC^-(X/k)$ and $\F^\star_\B\HP(X/k)$ are exhaustive.
    \end{enumerate}
\end{theorem}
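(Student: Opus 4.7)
The plan is to construct a filtered $S^1$-equivariant refinement of $\HH(X/k)$ whose graded pieces encode the derived de Rham complex, and then to apply the functors $(-)^{hS^1}$ and $(-)^{tS^1}$ levelwise. For a smooth affine $X = \Spec R$, the classical HKR theorem identifies $\pi_n \HH(R/k) \cong \Omega^n_{R/k}$, so the Postnikov filtration $\tau_{\geq \star}\HH(R/k)$ is a filtered object in $S^1$-equivariant spectra with graded pieces $\Omega^n_{R/k}[n]$. Left Kan extending from polynomial $k$-algebras produces an HKR filtration $\F^\star_{\HKR}\HH(R/k)$ for every simplicial commutative $k$-algebra $R$, with graded pieces $\wedge^n \L_{R/k}[n]$; the filtration is complete, and exhaustive when $R/k$ is quasi-lci (using the connectivity bound $\wedge^n \L_{R/k}[n] \in \Sp_{\geq n}$).

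The role of the Beilinson $t$-structure on $\widehat{\DF}(\Sp^{BS^1})$ is to identify $\F^\star_{\HKR}\HH(R/k)$ with the derived de Rham complex $\widehat{\L\Omega}_{R/k}$, equipped with its Hodge filtration, with the Connes operator $B$ playing the role of the de Rham differential. Concretely, since $\gr^n \F^\star_{\HKR}\HH(R/k) = \wedge^n \L_{R/k}[n]$ is concentrated in a single ``diagonal'' degree, the filtered object lies in the heart of the Beilinson $t$-structure, whose heart is equivalent to cochain complexes in the heart of the ordinary $t$-structure. One then defines
\[
\F^\star_\B\HC^-(R/k) := (\F^\star_{\HKR}\HH(R/k))^{hS^1}, \qquad \F^\star_\B\HP(R/k) := (\F^\star_{\HKR}\HH(R/k))^{tS^1},
\]
and computes the graded pieces using the Beilinson-heart description together with the fact that $(-)^{hS^1}$ and $(-)^{tS^1}$, applied to an object in the Beilinson heart of $\widehat{\DF}(\Sp^{BS^1})$, produce the Hodge-filtered de Rham complex and its periodization respectively, with the appropriate cohomological shift by $2n$. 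Globalizing over a qcqs $k$-scheme $X$ via Zariski descent yields the stated filtrations, and the Hodge sub-filtration in part (b) is the transport of the standard ``stupid'' filtration on the de Rham complex under this identification.

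The main obstacle is the correct formulation and verification of the Beilinson-heart identification: namely, that $\F^\star_{\HKR}\HH(R/k)$ defines an object of the Beilinson heart in $\widehat{\DF}(\Sp^{BS^1})$ whose underlying cochain complex, together with its $S^1$-equivariant structure, is the Hodge-filtered derived de Rham complex $\widehat{\L\Omega}_{R/k}$ with the Connes operator as differential. In the smooth case this reduces to the classical identification of the Connes operator with $d_{\dR}$ on HKR graded pieces; in general it is established by left Kan extension, provided that the left Kan extension respects the Beilinson $t$-structure in the relevant sense. Once this identification is in place, the graded-piece computations of (a) and (b) reduce to a direct calculation of $(-)^{hS^1}$ and $(-)^{tS^1}$ on objects of the Beilinson heart, and exhaustiveness in (c) follows from the connectivity of the HKR graded pieces under the quasi-lci hypothesis.
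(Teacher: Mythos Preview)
Your definition
\[
\F^\star_\B\HC^-(R/k) := (\F^\star_{\HKR}\HH(R/k))^{hS^1}
\]
is not the Beilinson filtration; it is simply the HKR filtration on $\HC^-$. Its graded pieces are $(\wedge^n\L_{R/k}[n])^{hS^1}$, which for smooth $R$ is $\Omega^n_{R/k}[n]\otimes_k k[[u]]$ with $|u|=-2$, not $\Omega^{\geq n}_{R/k}[2n]$. The functor $(-)^{hS^1}$ does not carry the Beilinson heart of $\widehat{\DF}(\Sp^{BS^1})$ to the Beilinson heart of $\widehat{\DF}(\Sp)$: it spreads each graded piece across infinitely many homotopical degrees. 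The paper's construction contains exactly the step you are missing: after forming $\F^\star_{\HKR}\HC^-(R/k)$, one takes the \emph{double-speed Beilinson Whitehead tower} of this filtered object, setting $\F^n_\B\HC^-(R/k)=\tau^\B_{\geq 2n}\F^\star_{\HKR}\HC^-(R/k)$. It is this second truncation that cuts out the Hodge-truncated de Rham complex (see the paper's Example~\ref{ex:circle} and Figure~\ref{fig:ss}, where the HKR filtration and the Beilinson filtration are visibly different regions of the $\E_2$-page).

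Two secondary issues follow from this. First, your assertion that $\F^\star_{\HKR}\HH(R/k)$ lies in the Beilinson heart because $\wedge^n\L_{R/k}[n]$ is ``concentrated in a single diagonal degree'' is true only for smooth $R$; in general $\wedge^n\L_{R/k}$ has higher homotopy, so the left Kan extension does not land in the heart and your identification with $\widehat{\L\Omega}_{R/k}$ collapses. Second, once the correct $\tau^\B_{\geq 2n}$ construction is in place, exhaustiveness in the quasi-lci case is not a connectivity statement about the HKR pieces alone: the paper is forced to introduce a second (CW) filtration coming from the skeletal filtration of $BS^1$, work in bicomplete bifiltered complexes, and use the Tor-amplitude hypothesis to bound the coconnectivity of the cofiber of $\F^u_\B\HC^-\to\HC^-$. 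Your sketch does not engage with any of these convergence issues.
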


Negative cyclic homology and periodic cyclic homology satisfy fpqc descent
by~\cite{bms2}*{Corollary~3.3} as a consequence
of the fact that the derived exterior powers $\Lambda^i\L_{-/k}$ of the
cotangent complex are fpqc sheaves by \cite{bms2}*{Theorem~3.1}. Since
$\widehat{\L\Omega}^{\geq n}_{-/k}$ has by definition a complete exhaustive decreasing
$\NN$-indexed filtration with graded pieces $\Lambda^i\L_{-/k}$, it follows
that the Hodge-truncated Hodge-completed derived de Rham complexes
$\widehat{\L\Omega}^{\geq n}_{-/k}$ are also fpqc sheaves. Thus, to prove the
theorem, it suffices to handle the affine case.

Theorem~\ref{thm:intro} follows from a much more general theorem,
Theorem~\ref{thm:bi}, which states
that in a suitable $\infty$-category, of bicomplete bifiltered complexes, the
Beilinson filtrations are exhaustive for any quasi-compact quasi-separated
$k$-scheme $X$.

\begin{remark}
    \begin{enumerate}
        \item[(i)] In case both are defined, the $p$-adic completion of the filtration of
            Theorem~\ref{thm:intro} agrees with the filtration
            of~\cite{bms2}*{Theorem~1.17}. This follows in the smooth case by examining
            the proofs of each theorem and in general by mapping the left Kan extension
            of our proof to the filtration obtained by quasisyntomic descent in their
            proof.
        \item[(ii)] In~\cite{an1}, with Nikolaus, we introduce a $t$-structure
            on cyclotomic spectra. As one application of the $t$-structure, we show using
            calculations of Hesselholt~\cite{hesselholt-ptypical}, that the methods of this paper
            can be used to construct a filtration $\F^\star_\B\TP(X)$ on
            topological periodic cyclic homology $\TP(X)$ when $X$ is a
            smooth scheme over a perfect field with graded pieces given by
            (shifted) crystalline cohomology $\gr^n_\B\TP(X)\we\R\Gamma_{\crys}(X/W(k))[2n]$.
            When $X=\Spec R$ is smooth and affine, then in fact
            $\gr^n_\B\TP(X)$ is given canonically by $\W\Omega^\bullet_R[2n]$,
            the shifted de Rham--Witt complex. This
            recovers several parts of~\cite{bms2}*{Theorems~1.10, 1.12,
            and~1.15} in the case of a smooth scheme over a perfect field.
    \end{enumerate}
\end{remark}

\paragraph{Outline.} In Section~\ref{sec:filtrations}, we outline the theory of filtrations we
will need. We explain the smooth affine case in Section~\ref{sec:smooth}.
In Section~\ref{sec:general}, we give the full proof, which follows from the
smooth case by taking non-abelian derived functors in an appropriate
$\infty$-category of bifiltrations.

\paragraph{Conventions and notation.} We work with $\infty$-categories
throughout, following the conventions of~\cite{htt} and~\cite{ha}. Hochschild
homology $\HH(R/k)$ and its relatives are viewed as objects in the derived
$\infty$-category $\D(k)$, possibly with additional structure. Typically, we
view objects of $\D(k)$ as being given by chain complexes up to
quasi-isomorphism, but several constructions will lead us to cochain complexes
as well. Given an object $X\in\D(k)$, we will write $\H_*X$ for its homology groups. We will write
$X^\bullet$ for a given cochain complex model for $X$. Thus, $X^\bullet$ is an
object of the category $\Ch(k)$ of cochain complexes of $k$-modules. The main example is the de Rham complex $\Omega_{R/k}^\bullet$ for a smooth commutative $k$-algebra
$R$.

\paragraph{Acknowledgments.} We thank Thomas Nikolaus for explaining the
interaction of the $S^1$-action and the Hopf element $\eta$ and Peter Scholze
for bringing this problem to our attention. We also benefited from
conversations with Dmitry Kaledin and Akhil Mathew about the Beilinson
$t$-structure. Finally, Elden Elmanto generously provided detailed comments on
a draft of the paper. This work was supported by NSF Grant DMS-1552766.

\section{Background on filtrations}\label{sec:filtrations}

Throughout this section, fix a commutative ring $k$. Let $\D(k)$ be the derived
$\infty$-category of $k$, a stable presentable $\infty$-categorical enhancement
of derived category of unbounded chain complexes of $k$-modules.
The derived tensor product of chain complexes makes $\D(k)$ into a presentably symmetric
monoidal stable $\infty$-category, meaning that $\D(k)$ is a symmetric monoidal
presentable $\infty$-category in which the tensor product commutes
with colimits in each variable.

The {\bf filtered derived $\infty$-category} of $k$ is $\DF(k)=\Fun(\ZZ^\op,\D(k))$,
the $\infty$-category of sequences $$X(\star)\colon\cdots\rightarrow X(n+1)\rightarrow
X(n)\rightarrow\cdots$$ in $\D(k)$. Write $X(\infty)=\lim_n
X(n)\we\lim\left(\cdots\rightarrow X(n+1)\rightarrow
X(n)\rightarrow\cdots\right)$ for the limit of the filtration.
A filtered complex $X(\star)\in\DF(k)$
is {\bf complete} if $X(\infty)\we 0$. Similarly, write $X(-\infty)$ for
$\colim_n X(n)\we\colim\left(\cdots\rightarrow X(n+1)\rightarrow
X(n)\rightarrow\cdots\right)$. Given a map
$X(-\infty)\rightarrow Y$, we say that $X(\star)$ is a
filtration on $Y$; if the map is an equivalence, we say that $X(\star)$ is an
{\bf exhaustive} filtration on $Y$.

We will refer to general objects $X(\star)$ of $\DF(k)$ as {\bf decreasing
$\ZZ$-indexed filtrations}.
We will write $\gr^nX$ for the cofiber of $X(n+1)\rightarrow X(n)$, the $n$th
graded piece of the filtration. Several filtrations of interest in this paper are in fact
{\bf $\NN$-indexed}, meaning that $X(0)\we X(-1)\we X(-2)\we\cdots$, or equivalently
that $\gr^n X\we 0$ for $n<0$.

Day convolution (using the additive symmetric monoidal structure of $\ZZ^\op$) makes $\DF(k)$ into
a presentably symmetric monoidal stable $\infty$-category. The Day convolution symmetric monoidal
structure has the property that
if $X(\star)$ and $Y(\star)$ are filtered objects of $\D(k)$, then $(X\otimes_kY)(\star)$
is a filtered spectrum with graded pieces
$\gr^n(X\otimes_kY)\we\bigoplus_{i+j=n}\gr^iX\otimes_k\gr^jY$.

A filtration $X(\star)$ equipped with the structure of a commutative algebra object (or
$\EE_\infty$-algebra object) in $\DF(k)$ is called a {\bf multiplicative} filtration.

One source of decreasing filtrations is via the Whitehead tower with respect to some
$t$-structure on $\D(k)$. We will use the standard Postnikov $t$-structure,
which has $\D(k)_{\geq 0}\subseteq\D(k)$ the full subcategory of $X$ such that
$\H_n(X)=0$ for $n<0$. Similarly, $\D(k)_{\leq 0}$ is the full subcategory of
$\D(k)$ consisting of $X$ such that $\H_n(X)=0$ for $n>0$. Given an object $X$,
its $n$-connective cover $\tau_{\geq n}X\rightarrow X$ has $\H_i(\tau_{\geq
n}X)\iso\H_i(X)$ for $i\geq n$ and $\H_i(\tau_{\geq n}X)=0$ for $i<n$.

\begin{example}
    If $R$ is a connective commutative algebra object in $\D(k)$, then the
    Whitehead tower $\tau_{\geq\star}R$ is a complete exhaustive decreasing multiplicative
    $\NN$-indexed filtration on $R$ with $\gr^n\tau_{\geq\star}R\we\H_n(R)[n]$.
\end{example}

For details and proofs of the statements above, see~\cite{gwilliam-pavlov}. For
more background, see~\cite{bms2}*{Section 5}. Now, we introduce the Beilinson
$t$-structure on $\DF(k)$.

\begin{definition}
    Let $\DF(k)_{\geq i}\subseteq\DF(k)$ be the full subcategory of those
    filtered objects $X(\star)$ such that $\gr^nX\in\D(k)_{\geq i-n}$.
    We let $\DF(k)_{\leq i}\subseteq\DF(k)$ be the full subcategory of those
    filtered objects $X(\star)$ such that $X(n)\in\D(k)_{\leq i-n}$.
\end{definition}

Note the asymmetry in the definition. The pair
$(\DF(k)_{\geq 0},\DF(k)_{\leq 0})$ defines a $t$-structure on $\DF(k)$
by~\cite{beilinson-perverse}; see also~\cite{bms2}*{Theorem~5.4} for a proof. We will
write $\tau_{\leq n}^\B$, $\tau_{\geq n}^\B$, $\pi_n^\B$ for the truncation and
homotopy object functors in the Beilinson $t$-structure.

The connective objects $\DF(k)_{\geq 0}$ are closed under the tensor
product on $\DF(k)$, and hence the natural map $\pi_0^\B\colon\DF(k)_{\geq
0}\rightarrow\DF(k)^\heart$ is symmetric monoidal. The heart is the abelian
category of cochain complexes of $k$-modules equipped with the usual tensor
product of cochain complexes.

\begin{remark}\label{rem:underlying}
    The Beilinson Whitehead tower $\tau_{\geq\star}^\B X$ is most naturally a
    bifiltered object, since each $\tau_{\geq n+1}^\B X\rightarrow\tau_{\geq n}^\B
    X$ is a map of objects of $\DF(k)$. If we forget the residual
    filtration on $\tau_{\geq\star}^\B X$ (by taking the colimit), then we obtain a
    new filtration on $X(-\infty)$.
    In this paper, we will need this only for $\NN$-indexed filtrations. In this
    case, each $n$-connective cover $\tau_{\geq n}^\B X$ is also $\NN$-indexed, and
    we can view the resulting filtration $(\tau_{\geq n}^\B X)(0)$ as a new filtration
    on $X(0)$.\footnote{Note that this is not an idempotent operation: applying
    the Beilinson Whitehead tower to $\tau_{\geq\star}^\B X(0)$ typically
    produces yet another filtration on $X(0)$.} If $X$ is a commutative algebra object of $\DF(k)$, then the Beilinson
    Whitehead tower $\tau_{\geq\star}^\B X$ is a new multiplicative filtration on
    $X$.
\end{remark}

For our purposes, it will be most important to understand the $n$-connective
cover functors. Given $X(\star)\in\DF(k)$, the $n$-connective cover in the
Beilinson $t$-structure $\tau_{\geq
n}^\B X\rightarrow X(\star)$ induces equivalences $$\gr^i\tau_{\geq n}^\B
X\we\tau_{\geq n-i}\gr^i X$$
(see~\cite{bms2}*{Theorem~5.4}). From this, we see that $\gr^i\pi_n^\B
X\we(\H_{n-i}(\gr^iX))[-i]$. The cochain complex corresponding to
$\pi_n^\B X$ is of the form
$$\cdots\rightarrow\H_n(\gr^0X)\rightarrow\H_{n-1}(\gr^1X)\rightarrow\H_{n-2}(\gr^2X)\cdots,$$
where $\H_n(\gr^0X)$ is in cohomological degree $0$ and where the differentials
are induced from the boundary maps in homology coming from the cofiber
sequences $\gr^{i+1}X\rightarrow X(i)/X(i+2)\rightarrow\gr^iX$.
See~\cite{bms2}*{Theorem~5.4(3)} for details.

The next example illustrates our main idea in a general setting.

\begin{example}\label{ex:circle}
    Let $X\in\D(k)$ be an object equipped with an $S^1$-action. The Whitehead tower
    $\tau_{\geq\star}X$ defines a complete exhaustive $S^1$-equivariant
    $\ZZ$-indexed filtration $\F^\star_\P X$ on $X$ with graded pieces
    $\gr^n_\P X\we\H_n(X)[n]$, equipped with the trivial $S^1$-action.
    Applying homotopy $S^1$-fixed points, we obtain a complete $\ZZ$-indexed
    filtration $\F^\star_\P X^{hS^1}$ on $X^{hS^1}$ with graded pieces
    $\gr^n_\P X^{hS^1}\we(\H_n(X)[n])^{hS^1}$. Let $\F^\star_\B X^{hS^1}$ be
    the double-speed Whitehead tower of $\F^\star_\P X^{hS^1}$ in the Beilinson
    $t$-structure on $\DF(k)$, so that $\F^n_\B X^{hS^1}=\tau_{\geq
    2n}^\B\F^\star X^{hS^1}$. By definition, $\F^n_\B X^{hS^1}$ is a
    filtered spectrum with $$\gr^i\F^n_\B X^{hS^1}\we\tau_{\geq 2n-i}\gr^i_\P
    X^{hS^1}\we\tau_{\geq 2n-i}(\H_i(X)[i])^{hS^1}.$$ Hence, $$\gr^i\gr^n_\B
    X^{hS^1}\we\begin{cases}\H_i(X)[2n-i]&\text{if $n\leq
    i$,}\\0&\text{otherwise.}\end{cases}$$ This shows in fact that $\gr^n_\B
    X^{hS^1}[-2n]\we\pi_{2n}^\B\F^\star_\P X^{hS^1}$ and hence is in
    $\DF(k)^\heart$, the abelian category of cochain complexes, and is
    represented by a cochain complex
    $$0\rightarrow\H_n(X)\rightarrow\H_{n+1}(X)\rightarrow\H_{n+2}(X)\rightarrow\cdots,$$
    where $\H_n(X)$ is in cohomological degree $n$. The differential is given
    by Connes--Tsygan $B$-operator. An object $X\in\D(k)$ with an
    $S^1$-action is the same as a dg module over $\C_\bullet(S^1,k)$, the dg
    algebra of chains on
    $S^1$. The fundamental class $B$ of the circle defines a $k$-module
    generator of $\H_1(S^1,k)$ and $B^2=0$. The differential in the
    cochain complex above is given by the action of $B$. Hence, we have
    obtained a filtration $\F^\star_\B X^{hS^1}$ on $X^{hS^1}$ with graded
    pieces given by $(\H_{\bullet\geq n}(X),B)[2n]$.
\end{example}

\begin{remark}
    The same argument shows that there is a filtration $\F^\star_\B X^{tS^1}$
    on the $S^1$-Tate construction $X^{tS^1}$ with
    graded pieces $\gr^n_\B X^{tS^1}\we(\H_\bullet(X),B)[2n]$.
    We ignore for the time being any convergence issues.
\end{remark}

\section{The smooth case}\label{sec:smooth}

The Hochschild--Kostant--Rosenberg theorem~\cite{hochschild-kostant-rosenberg}
implies that there are canonical isomorphisms $\Omega^n_{R/k}\iso\HH_n(R/k)$
when $R$ is a smooth commutative $k$-algebra. In particular, letting
$\F^\star_{\HKR}\HH(R/k)$ denote the usual Whitehead tower, given by the good
truncations $\tau_{\geq\star}\HH(R/k)$,  we see that there are natural
equivalences $\gr^n_{\HKR}\HH(R/k)\we\Omega^n_{R/k}[n]$ for all $n\geq 0$.
Applying homotopy $S^1$-fixed points, we obtain a complete exhaustive
decreasing multiplicative $\NN$-indexed filtration $\F^\star_{\HKR}\HC^-(R/k)$
on $\HC^-(R/k)$.

\begin{definition}
    Let $\F^\star_\B\HC^-(R/k)$ be the double-speed Beilinson Whitehead tower for the
    filtration $\F^\star_{\HKR}\HC^-(R/k)$, so that
    $\F^n_\B\HC^-(R/k)=\tau_{\geq 2n}^\B\F^\star_\HKR\HC^-(R/k)$. For a picture
    of this filtration, see Figure~\ref{fig:ss}.
\end{definition}

Example~\ref{ex:circle} implies that this is an
multiplicative $\NN$-indexed filtration on $\HC^-(R/k)$; each
graded piece $\pi_n^\B\F^\star_{\HKR}\HC^-(R/k)\we\gr^n_\B\HC^-(R/k)[-2n]$ in
$\DF(k)^\heart$ is given by a cochain complex of the form
$$\cdots\rightarrow 0\rightarrow\Omega^{n}_{R/k}\rightarrow\Omega^{n+1}_{R/k}\rightarrow\cdots,$$ where
$\Omega^n_{R/k}$ is in cohomological degree $n$. It is verified
in~\cite{loday}*{2.3.3} that the differential is indeed the de Rham
differential. This can also be checked by hand in the case of $k[x]$ to which
the general case reduces. It follows that $\gr^n_\B\HC^-(R/k)\we\Omega^{\bullet\geq
n}_{R/k}[2n]$. The additional filtration on $\F^\star_\B\HC^-(R/k)$ reduces to the Hodge
filtration on $\Omega^{\bullet\geq n}_{R/k}[2n]$.
The exhaustiveness and completeness of $\F^\star_\B\HC^-(R/k)$ follows from
Lemma~\ref{lem:basics} below. The case of $\HP(R/k)$ is similar. This proves
Theorem~\ref{thm:intro} in the case of smooth algebras.\footnote{Note that for
$R$ a smooth $k$-algebra, the de Rham complex $\Omega^\bullet_{R/k}$ is already
Hodge-complete.}

We needed the following lemma in the proof.

\begin{lemma}\label{lem:basics}
    Let $X(\star)$ be a complete $\NN$-indexed filtration on $X=X(0)$ and let
    $\tau_{\geq\star}^\B X$ be the associated Beilinson Whitehead tower in $\DF(k)$.
    \begin{enumerate}
        \item[{\rm (i)}] The truncations $\tau_{\geq n}^\B X$ and $\tau_{\leq
            n-1}^\B X$ are complete for all $n\in\ZZ$.
        \item[{\rm (ii)}] The
            filtration $(\tau_{\geq\star}^\B X)(0)$ on $X\we X(0)$ is complete and exhaustive.
    \end{enumerate}
\end{lemma}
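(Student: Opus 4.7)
The plan for (i) is to handle $\tau_{\leq n-1}^\B X$ first and deduce the claim for $\tau_{\geq n}^\B X$ from the defining fiber sequence. For the coconnective truncation, the definition of the Beilinson $t$-structure gives $(\tau_{\leq n-1}^\B X)(m) \in \D(k)_{\leq n-1-m}$, so for each fixed $j$ the homology tower $\H_j((\tau_{\leq n-1}^\B X)(m))$ is eventually zero as $m\to\infty$. The Milnor exact sequence then kills both $\lim$ and $\lim^1$, giving $(\tau_{\leq n-1}^\B X)(\infty)\we 0$. For $\tau_{\geq n}^\B X$ I would apply $\lim_m$ to the fiber sequence $\tau_{\geq n}^\B X \to X \to \tau_{\leq n-1}^\B X$ in $\DF(k)$. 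Since $\D(k)$ is stable, limits preserve fiber sequences, and the outer vanishings (by hypothesis on $X$ and by the previous step) force $(\tau_{\geq n}^\B X)(\infty)\we 0$.

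For (ii), I first observe that $\tau_{\geq n}^\B X$ is still $\NN$-indexed, since the formula $\gr^i\tau_{\geq n}^\B X \we \tau_{\geq n-i}\gr^i X$ forces the graded pieces for $i<0$ to vanish. For exhaustiveness, the cofiber sequence $(\tau_{\geq n}^\B X)(0) \to X(0) \to (\tau_{\leq n-1}^\B X)(0)$ in $\D(k)$ reduces the task to showing $\colim_n (\tau_{\leq n-1}^\B X)(0)\we 0$ as $n\to -\infty$. This follows from $(\tau_{\leq n-1}^\B X)(0) \in \D(k)_{\leq n-1}$: for each fixed $j$, $\H_j((\tau_{\leq n-1}^\B X)(0))$ vanishes once $n\leq j$, and a filtered colimit of groups that are eventually zero is zero.

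The completeness claim in (ii) is the step requiring the most care. The plan is to set $Y=\lim_n \tau_{\geq n}^\B X$ in $\DF(k)$, computed pointwise, so that $Y(0)=\lim_n (\tau_{\geq n}^\B X)(0)$ is precisely the object to be shown zero. Three facts combine: first, $Y$ is itself a complete filtered object, because $Y(\infty)\we \lim_n\lim_m (\tau_{\geq n}^\B X)(m)\we 0$ by part (i) together with the commutation of limits; second, since limits preserve fiber sequences in $\D(k)$, the graded pieces satisfy $\gr^i Y \we \lim_n \tau_{\geq n-i}\gr^i X \we 0$, the last equivalence being the standard vanishing of the connective-cover tower in $\D(k)$; third, a filtered object with every graded piece zero has every transition map $Y(m+1)\to Y(m)$ an equivalence, so $Y$ is essentially constant along $\ZZ^{\op}$, and completeness then forces $Y\we 0$.

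The main obstacle is assembling this three-step completeness argument in (ii) cleanly, in particular identifying $\gr^i Y$ with $\lim_n \gr^i \tau_{\geq n}^\B X$ and extracting $Y\we 0$ from the combination of completeness and vanishing of graded pieces. Everything else reduces to routine manipulation of the defining fiber sequences of the Beilinson $t$-structure.
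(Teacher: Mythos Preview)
Your proposal is correct and follows essentially the same approach as the paper. The paper's proof is slightly terser---it dispatches (i) by noting that $\lim_i(\tau_{\leq n-1}^\B X)(i)\in\D(k)_{\leq -\infty}\simeq 0$ rather than invoking the Milnor sequence explicitly, and for exhaustiveness phrases the conclusion as ``$(\tau_{\geq n}^\B X)(0)\to X(0)$ is an $n$-equivalence''---but the logical structure (handle $\tau_{\leq n-1}^\B$ first, deduce $\tau_{\geq n}^\B$ from the fiber sequence, then for completeness in (ii) show the limit $Y$ is complete with vanishing graded pieces) is identical.
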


\begin{proof}
    Since the full subcategory $\widehat{\DF}(k)\subseteq\DF(k)$ of complete
    filtrations is stable, to prove part (i), it is enough to show that
    $\tau_{\leq n-1}^\B X$ is complete for all $n$. However, $(\tau_{\leq
    n-1}^\B X)(i)\in\D(k)_{\leq n-1-i}$. We find that
    $\lim_{i}(\tau_{\leq n-1}^\B X)(i)$ is in $\D(k)_{\leq
    -\infty}\we 0$. This proves (i). It follows from (i) and the fact that
    complete filtered spectra are closed under colimits that we can view
    $\lim_{n}\tau_{\geq n}^\B X$ as a complete filtered
    spectrum $Y(\star)$ with graded pieces
    $\gr^iY\we\lim_{n}\gr^i\tau_{\geq n}^\B
    X\we\lim_{n}\tau_{\geq n-i}\gr^i X$.
    Hence, each $\gr^iY$ is $\infty$-connective. Thus, $\gr^iY\we 0$ for all
    $i$ and hence $Y(\star)\we 0$ as it is complete. This proves the
    completeness statement in (ii). Finally, $(\tau_{\leq n-1}^\B
    X)(0)\in\D(k)_{\leq n-1}$. It follows that $(\tau_{\geq n}^\B
    X)(0)\rightarrow X(0)\we X$ is an $n$-equivalence and exhaustiveness
    follows by letting $n\rightarrow-\infty$.
\end{proof}

\begin{figure}[h]
    \centering
    \includegraphics[scale=0.3]{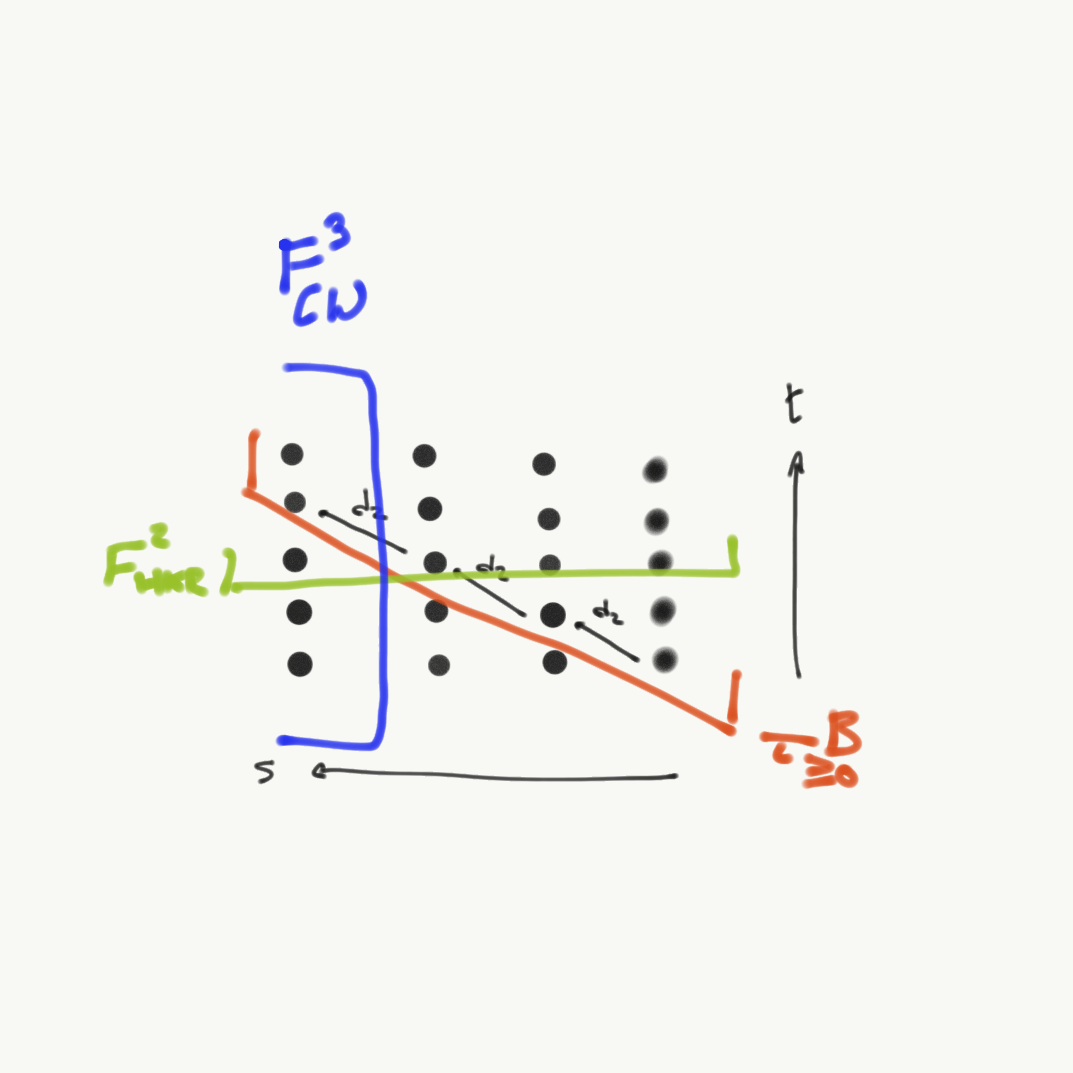}
    \caption{The Beilinson filtration. The figure shows the $\E_2$-page of the
    spectral sequence
    $\E_2^{s,t}=\H^s(BS^1,\HH_t(R/k))\Rightarrow\HC^-_{t-s}(R/k)$ and which parts of
    $\HC^-(R/k)$ are cut out by $\tau_{\geq 0}^\B\HC^-(R/k)$,
    $\F^2_{\HKR}\HC^-(R/k)$, and $\F^3_{\CW}\HC^-(R/k)$, respectively. For the
    definition of the CW filtration, see Section~\ref{sec:general}.}
    \label{fig:ss}
\end{figure}

\section{The general case}\label{sec:general}

Our general strategy for the proof of Theorem~\ref{thm:intro} is to left Kan extend
from the case of smooth algebras. Because of convergence issues, we are forced
to Kan extend in an $\infty$-category which keeps track of multiple
filtrations.

\newcommand{\poly}{\mathrm{poly}}

Let $k$ be a commutative ring, $\sCAlg_k$ the $\infty$-category of simplicial
$k$-algebras, and $\CAlg_k^{\poly}\subseteq\sCAlg_k$ the full subcategory of finitely
generated polynomial $k$-algebras. This embedding admits a universal property:
given any $\infty$-category $\Cscr$ which admits sifted colimits, the forgetful
functor $\Fun'(\sCAlg_k,\Cscr)\rightarrow\Fun(\CAlg_k^\poly,\Cscr)$ is an
equivalence, where $\Fun'(\sCAlg_k,\Cscr)$ is the $\infty$-category of sifted
colimit-preserving functors $\sCAlg_k\rightarrow\Cscr$. Given
$F:\CAlg_k^\poly\rightarrow\Cscr$, we call the corresponding sifted
colimit-preserving functor
$\mathrm{d}F\colon\sCAlg_k\rightarrow\Cscr$ the {\bf left Kan extension} or the {\bf non-abelian
derived functor} of $F$. For details, see~\cite{htt}*{Section~5.5.9}.

Let $R\in\sCAlg_k$ and fix $F\colon\CAlg_k^\poly\rightarrow\Cscr$.
Then, one extends $F$ to all polynomial rings by taking filtered colimits in
$\Cscr$. To compute the value of the left Kan extension $\mathrm{d}F$ of $F$ on
$R$, one takes a simplicial resolution $|P_\bullet|\we R$ where each
$P_\bullet$ is polynomial (but not necessarily finitely generated), and
computes $|F(P_\bullet)|$ in $\Cscr$.

Let $k$ be a commutative ring, and let $R$ be a simplicial commutative
$k$-algebra. Then, $\HH(R/k)$ is a connective commutative algebra object in
$\D(k)^{BS^1}$, the $\infty$-category of complexes of $k$-modules equipped with
$S^1$-action. We could apply Example~\ref{ex:circle} to obtain a filtration on
$\HC^-(R/k)=\HH(R/k)^{hS^1}$ with graded pieces truncations of the cochain
complex $(\HH_*(R/k),B)$. However, in the non-smooth case, this does not
capture derived de Rham cohomology.

We use the fact that Hochschild homology commutes with sifted colimits (see for
example~\cite{bms2}*{Remark~2.3})
to Kan extend the HKR filtration of~\cite{hochschild-kostant-rosenberg} from finitely generated polynomial algebras to
all simplicial commutative $k$-algebras. This
gives functorial complete exhaustive decreasing multiplicative $\NN$-indexed $S^1$-equivariant
multiplicative filtration $\F_{\HKR}^{\star}\HH(R/k)$ on $\HH(R/k)$ with graded pieces
$\gr^t_{\HKR}\HH(R/k)\we\Lambda^t\L_{R/k}[t]$ with the trivial $S^1$-action,
where $\L_{R/k}$ denotes the cotangent complex and $\Lambda^t\L_{R/k}$ is the
$t$th derived exterior power of the cotangent complex.
Since $\F_{\HKR}^t\HH(R/k)$ is $t$-connective for all $t$, it follows that the
HKR filtration is complete. 

Applying homotopy $S^1$-fixed points or Tate, we obtain decreasing
multiplicative $\NN$-indexed filtrations
$\F^\star_{\HKR}\HC^-(R/k)$ and $\F^\star_{\HKR}\HP(R/k)$ on negative cyclic
homology $$\HC^-(R/k)=\HH(R/k)^{hS^1}$$ and periodic cyclic homology
$$\HP(R/k)=\HH(R/k)^{tS^1}.$$ These filtrations are both
complete. To see this, note first that the induced HKR filtration
$\F^\star_{\HKR}\HC(R/k)$ on cyclic homology $\HC(R/k)=\HH(R/k)_{hS^1}$ is
complete since $\F^t_{\HKR}\HC(R/k)\we(\F^t_{\HKR}\HH(R/k))_{hS^1}$ is
$t$-connective. Thus, since we have a cofiber sequence
$\F^\star_{\HKR}\HC(R/k)[1]\rightarrow\F^\star_{\HKR}\HC^-(R/k)\rightarrow\F^\star_{\HKR}\HP(R/k)$
in $\DF(k)$, it suffices to see that the HKR filtration on
$\HC^-(R/k)$ is complete. But, this follows from the fact that $(-)^{hS^1}$
commutes with limits.


Negative cyclic homology admits a second filtration, coming from the
standard cell structure $\CC\PP^0\subseteq\CC\PP^1\subseteq\cdots$ on
$BS^1\we\CC\PP^\infty$. This second filtration is compatible with the HKR filtration
since on Hochschild homology the HKR filtration is $S^1$-equivariant.
To be precise, we consider the double filtration
$$\F^t_{\HKR}\F^s_{\CW}\HC^-(R/k)=\fib((\F^t_{\HKR}\HH(R/k))^{hS^1}\rightarrow(\F^t_{\HKR}\HH(R/k))^{h\Omega\CC\PP^{s-1}}),$$
which has graded pieces
$$\gr^t_{\HKR}\gr^s_{\CW}\HC^-(R/k)\we\Lambda^t\L_{R/k}[t-2s].$$
This bifiltration is multiplicative in the natural sense with respect to the
Day convolution symmetric monoidal structure on
$\Fun(\NN^\op\times\NN^\op,\D(k))$ where we give $\NN^\op\times\NN^\op$ the
symmetric monoidal structure coming from (the opposite of) addition in the
monoid $\NN\times\NN$.

We will let $\DBF(k)$ denote the $\infty$-category
$\Fun(\NN^\op\times\NN^\op,\D(k))$
of $\NN^\op\times\NN^\op$-indexed {\bf bifiltered complexes} of $k$-modules and we will denote by $\widehat{\DBF}(k)$
the full subcategory of $\DBF(k)$ on those {\bf bicomplete} bifiltered
complexes, i.e., those $X(\star,\star)$ such that for
each $s$ one has $\lim_tX(s,t)\we 0$ and for each $t$ one has $\lim_sX(s,t)\we 0$.
Note that either condition implies that $X(\star,\star)$ is complete in the
weaker sense that $\lim_{s,t}X(s,t)\we 0$.

\begin{remark}
    Bicomplete bifiltered objects are the same as complete filtered objects in
    the complete filtered derived category.
\end{remark}

\begin{lemma}
    For any simplicial commutative $k$-algebra $R$,
    the filtration $\F^\star_{\HKR}\F^\star_{\CW}\HC^-(R/k)$ is bicomplete.
\end{lemma}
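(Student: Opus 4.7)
The plan is to verify the two vanishing conditions defining bicompleteness separately: for each fixed $s$, show that $\lim_t \F^t_{\HKR}\F^s_{\CW}\HC^-(R/k) \we 0$, and for each fixed $t$, show that $\lim_s \F^t_{\HKR}\F^s_{\CW}\HC^-(R/k) \we 0$. In both cases, the approach is to start from the defining fiber sequence
$$\F^t_{\HKR}\F^s_{\CW}\HC^-(R/k) \to (\F^t_{\HKR}\HH(R/k))^{hS^1} \to (\F^t_{\HKR}\HH(R/k))^{h\Omega\CC\PP^{s-1}}$$
and commute the relevant limit past the fiber and past the two mapping-spectrum functors.

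For the $t$-direction at fixed $s$, the key input is the completeness of the HKR filtration on $\HH(R/k)$, which was established earlier in this section from the $t$-connectivity of $\F^t_{\HKR}\HH(R/k)$. Both $(-)^{hS^1}$ and $(-)^{h\Omega\CC\PP^{s-1}}$ are mapping spectra out of a fixed small space and hence preserve limits in the argument variable. Passing $\lim_t$ through the fiber sequence therefore yields $0 \to 0$, and the fiber vanishes as well.

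For the $s$-direction at fixed $t$, write $Y = \F^t_{\HKR}\HH(R/k)$, and use the identification $Y^{h\Omega\CC\PP^{s-1}} \we F((\CC\PP^{s-1})_+, Y)$, interpreting the right-hand side as global sections of the local system on $\CC\PP^{s-1}$ associated to the $S^1$-action on $Y$, pulled back along $\CC\PP^{s-1} \hookrightarrow \CC\PP^\infty \we BS^1$. This rests on the equivalence $B\Omega\CC\PP^{s-1} \we \CC\PP^{s-1}$, valid because $\CC\PP^{s-1}$ is simply connected. Then
$$\lim_s Y^{h\Omega\CC\PP^{s-1}} \we \lim_s F((\CC\PP^{s-1})_+, Y) \we F\bigl((\colim_s \CC\PP^{s-1})_+, Y\bigr) \we F((\CC\PP^\infty)_+, Y) \we Y^{hS^1},$$
so the map $Y^{hS^1} \to \lim_s Y^{h\Omega\CC\PP^{s-1}}$ is an equivalence and its fiber vanishes. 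Conceptually, this is just the completeness of the CW filtration on $Y^{hS^1}$ induced by the cellular structure on $BS^1 = \CC\PP^\infty$.

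The main obstacle I foresee is making the identification $Y^{h\Omega\CC\PP^{s-1}} \we F((\CC\PP^{s-1})_+, Y)$ precise, together with compatibility of the restriction maps as $s$ varies and with the non-trivial $S^1$-action on $Y$. Once that identification is in place, both vanishings reduce to routine commutations of limits with fibers and with mapping spectra.
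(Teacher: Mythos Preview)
Your proposal is correct and follows the same overall decomposition as the paper: verify each direction of bicompleteness separately by commuting the limit through the defining fiber sequence, using that $(-)^{hS^1}$ and $(-)^{h\Omega\CC\PP^{s-1}}$ preserve limits. The $t$-direction argument is identical to the paper's.

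For the $s$-direction, however, you take a different route. The paper argues that for any \emph{bounded below} $S^1$-spectrum $X$ the map $X^{hS^1}\to\lim_s X^{h\Omega\CC\PP^{s-1}}$ is an equivalence, proving this first for $X$ with a single nonzero homotopy group, then by induction for bounded complexes, and finally by passing up the Postnikov tower (which is where bounded-belowness enters; it applies here since $\F^t_{\HKR}\HH(R/k)$ is $t$-connective). Your argument instead identifies $Y^{h\Omega\CC\PP^{s-1}}$ directly with sections of the associated local system over $\CC\PP^{s-1}$ via $B\Omega\CC\PP^{s-1}\simeq\CC\PP^{s-1}$, and then uses $\colim_s\CC\PP^{s-1}\simeq\CC\PP^\infty$ to conclude. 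This is more conceptual and does not require $Y$ to be bounded below, so it is slightly more general; the price is exactly the compatibility issue you flag, namely making the identification functorial in $s$ and compatible with the given map from $Y^{hS^1}$. That point is genuine but routine once one works systematically with local systems on $BS^1$ and their restrictions along the skeletal inclusions. The paper's Postnikov argument trades that bookkeeping for a connectivity hypothesis that is available anyway.
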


\begin{proof}
    Fix $s$. We have $$\lim_t\F^t_{\HKR}\F^s_{\CW}\HC^-(R/k)\we 0$$ as both
    $(-)^{hS^1}$ and $(-)^{h\Omega\CC\PP^{s-1}}$ commute with limits. Now, fix
    $t$. Then, we want to show that
    $$\lim_s\F^t_{\HKR}\F^s_{\CW}\HC^-(R/k)\we\fib\left((\F^t_{\HKR}\HH(R/k)^{hS^1}\rightarrow\lim_s(\F^t_{\HKR}\HH(R/k))^{h\Omega\CC\PP^{s-1}}\right)\we
    0.$$ But, for any bounded below spectrum with an $S^1$-action $X$, the
    natural map $X^{hS^1}\rightarrow\lim_sX^{h\Omega\CC\PP^{s-1}}$ is an
    equivalence. Indeed, this follows by a computation if $X$ has a single
    non-zero homotopy group, and then it follows for all homologically bounded complexes by
    induction. Then, it follows in the limit up the Postnikov tower since both
    $(-)^{hS^1}$ and $\lim_s(-)^{h\Omega\CC\PP^{s-1}}$ commute with limits.
\end{proof}

We can Kan extend $\HC^-(-/k)$ with its
bifiltration from finitely generated polynomial $k$-algebras to all simplicial commutative
$k$-algebras to obtain $\F^\star_{\HKR}\F^\star_{\CW}\dHC^-(R/k)$, a
bifiltration on derived negative cyclic homology. Let $\widehat{\dHC}^-(R/k)$ denote bicompleted 
derived negative cyclic homology and let
$\F^\star_{\HKR}\F^\star_{\CW}\widehat{\dHC}^-(R/k)$ be the bicomplete bifiltration on
bicompleted derived negative cyclic homology, which is the Kan extension of
$\F^\star_{\HKR}\F^\star_{\CW}\HC^-(-/k)$ as a
functor $\CAlg_k^{\poly}\rightarrow\widehat{\DBF}(k)$ to all simplicial
commutative $k$-algebras.

\begin{lemma}\label{lem:comp}
    For any $R\in\sCAlg_k$,
    the natural map
    $\F^\star_{\HKR}\F^\star_{\CW}\widehat{\dHC}^-(R/k)\rightarrow\F^\star_{\HKR}\F^\star_{\CW}\HC^-(R/k)$
    is an equivalence in $\widehat{\DBF}(k)$.
\end{lemma}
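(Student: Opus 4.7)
The plan is to reduce to checking that the comparison map is an equivalence on bigraded pieces $\gr^t_\HKR \gr^s_\CW$. Both sides are already bicomplete---the left-hand side by construction as a Kan extension into $\widehat{\DBF}(k)$, the right-hand side by the preceding lemma---so the cofiber $C$ of the comparison map computed in $\widehat{\DBF}(k)$ agrees with the cofiber in $\DBF(k)$ and is itself bicomplete. A bicomplete object $D \in \DBF(k)$ whose bigraded pieces all vanish must itself be zero: for each $t$, the filtered object $\gr^t_\HKR D \in \DF(k)$ is complete in the $s$-direction (since $\lim_s D(\bullet,s) \we 0$ and $\gr^t_\HKR$ commutes with limits in a stable $\infty$-category) with zero $s$-graded pieces, hence $\gr^t_\HKR D \we 0$ for all $t$; applying the analogous argument in the $t$-direction gives $D \we 0$. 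It therefore suffices to show the comparison map induces equivalences on $\gr^t_\HKR \gr^s_\CW$.

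For the right-hand side, the HKR filtration on Hochschild homology gives $\gr^t_\HKR \HH(R/k) \we \Lambda^t \L_{R/k}[t]$ with trivial $S^1$-action for every $R \in \sCAlg_k$, by Kan extension of the polynomial case where this is standard. For $X$ carrying the trivial $S^1$-action, $X^{hS^1} \we X[[u]]$ and $X^{h\Omega\CC\PP^{s-1}} \we X[u]/u^s$ with $|u|=-2$, so
\begin{equation*}
    \gr^t_\HKR \gr^s_\CW \HC^-(R/k) \we \Lambda^t \L_{R/k}[t-2s].
\end{equation*}
For the left-hand side, $\gr^t_\HKR \gr^s_\CW$ is a finite iterated cofiber of evaluation functors on $\DBF(k)$, hence a left adjoint into $\D(k)$; moreover bicompletion preserves bigraded pieces. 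Choosing a simplicial resolution $P_\bullet \to R$ by finitely generated polynomial $k$-algebras,
\begin{equation*}
    \gr^t_\HKR \gr^s_\CW \widehat{\dHC}^-(R/k) \we |\Omega^t_{P_\bullet/k}|[t-2s] \we \Lambda^t \L_{R/k}[t-2s],
\end{equation*}
and the comparison map is the natural identification under these two calculations.

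The main obstacle I anticipate is the careful bookkeeping between Kan extension in $\DBF(k)$ and in $\widehat{\DBF}(k)$ (which differ by a bicompletion), together with verifying that the natural comparison map really induces the identity on bigraded pieces produced by the two calculations above. Both points reduce, however, to the fact that $\gr^t_\HKR \gr^s_\CW$ is a left adjoint, so it commutes with sifted colimits and is insensitive to bicompletion; the identifications with $\Lambda^t \L_{R/k}[t-2s]$ then match by naturality and the polynomial case.
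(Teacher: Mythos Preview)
Your proof is correct and follows essentially the same approach as the paper: reduce to bigraded pieces using bicompleteness, then use that $\gr^t_{\HKR}\gr^s_{\CW}$ commutes with colimits (and is unaffected by bicompletion) to identify both sides with $\Lambda^t\L_{R/k}[t-2s]$. You spell out more of the routine details (why vanishing of bigraded pieces forces a bicomplete object to vanish, and the explicit computation for trivial $S^1$-action), but the logical skeleton is the same.
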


\begin{proof}
    Since both bifiltered objects are bicomplete, it is enough to check on graded pieces.
    Since the graded pieces functors $\gr^t\gr^s\colon\widehat{\DBF}(k)\rightarrow\D(k)$
    commute with colimits,
    $\gr^t_{\HKR}\gr^s_{\CW}\widehat{\dHC}^-(R/k)$ is the left Kan
    extension of $R\mapsto\Omega_{R/k}^t[t-2s]$ from finitely generated
    polynomial algebras to all simplicial commutative $k$-algebras, which is
    precisely
    $\gr^t_{\HKR}\gr^s_{\CW}\HC^-(R/k)\we\Lambda^t\L_{R/k}[t-2s]$.
\end{proof}

\begin{remark}
    The lemma says that even though $\HC^-(-/k)$ does not commute with sifted
    colimits as a functor $\sCAlg_k\rightarrow\D(k)$, it does commute with
    sifted colimits as a functor $\sCAlg_k\rightarrow\widehat{\DBF}(k)$ when
    equipped with its skeletal and HKR filtrations. In particular, we can
    compute $\HC^-(R/k)$ by left Kan extending from finitely generated
    polynomial algebras and then bicompleting.
\end{remark}

Fix $s$ and consider the Whitehead tower
$$\cdots\rightarrow\tau_{\geq r}^\B\F_{\HKR}^\star\F^s_{\CW}\HC^-(R/k)\rightarrow\tau_{\geq
r-1}^\B\F_{\HKR}^\star\F^s_{\CW}\HC^-(R/k)\rightarrow\cdots$$ in the
Beilinson $t$-structure on filtered complexes, where we are taking Beilinson
connective covers in the $\HKR$-direction. Recall that
\begin{equation}\label{eq:1}\gr^t\tau_{\geq
r}^\B\F^\star_{\HKR}\F^s_{\CW}\HC^-(R/k)\we\tau_{\geq
-t+r}\gr^t_{\HKR}\F^s_{\CW}\TC^-(R/k)\we\tau_{\geq
-t+r}\fib\left((\Lambda^t\L_{R/k}[t])^{hS^1}\rightarrow(\Lambda^t\L_{R/K}[t])^{h\Omega\CC\PP^{s-1}}\right)
\end{equation}
and hence that
$$\gr^t\pi_r^\B\F^\star_{\HKR}\F^s_{\CW}\HC^-(R/k)\we\left(\pi_{-t+r}\fib\left(\Lambda^t\L_{R/k}[t])^{hS^1}\rightarrow(\Lambda^t\L_{R/k}[t])^{h\Omega\CC\PP^{s-1}}\right)\right)[-t+r].$$
Here, the notation implies that we view $\pi_{-t+r}$ of the object on the right
as a complex concentrated in degree $-t+r$.
If $R/k$ is smooth, we have
$\Lambda^t\L_{R/k}\we\Omega^t_{R/k}$. In particular, in this case, we see
that $$\gr^t\pi_r^\B\F^\star_{\HKR}\F^s_{\CW}\HC^-(R/k)\we\begin{cases}
    \Omega^t_{R/k}[-t+r] & \text{if $r$ is even and $r\leq 2t-2s$,}\\
    0&\text{otherwise.}
\end{cases}
$$

\begin{theorem}\label{thm:bi}
    There is a complete exhaustive multiplicative decreasing $\ZZ$-indexed filtration
    $\F^\star_{\B}$ on the bicomplete bifiltered complex
    $\F^\star_{\HKR}\F^\star_{\CW}\HC^-(R/k)$. The graded piece $\gr^u_{\B}\HC^-(R/k)$ is
    naturally equivalent to $\widehat{\L\Omega}^{\geq u}_{R/k}[2u]$, the
    Hodge-complete derived de Rham cohomology of $R$, naively truncated.
    Moreover, the remaining HKR and CW filtrations on $\gr^u_{\B}\HC^-(R/k)$
    both coincide with the Hodge filtration. Finally, the underlying filtration
    $\F^\star_{\B}\HC^-(R/k)$ in the sense of Remark~\ref{rem:underlying} is a complete filtration of $\HC^-(R/k)$; if
    $\L_{R/k}$ has Tor-amplitude contained in $[0,1]$, then the filtration is exhaustive.
\end{theorem}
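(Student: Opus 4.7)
The plan is to left Kan extend the smooth-affine construction of Section~\ref{sec:smooth} through the bicomplete bifiltered $\infty$-category $\widehat{\DBF}(k)$. On a finitely generated polynomial $k$-algebra $R$, I would set
$$\F^u_\B := \tau_{\geq 2u}^\B\, \F^\star_{\HKR}\F^\star_{\CW}\HC^-(R/k),$$
where $\tau_{\geq 2u}^\B$ is the Beilinson connective cover applied in the HKR direction. This produces a $\ZZ$-indexed decreasing filtration whose values remain bicomplete bifiltered complexes by the argument of Lemma~\ref{lem:basics}(i) applied to one axis. Multiplicativity is inherited from the Day convolution symmetric monoidal structure on $\widehat{\DBF}(k)$ together with the closure of the Beilinson $\geq 0$ subcategory under the tensor product, so that $\tau_{\geq 2(\star)}^\B$ refines to a lax symmetric monoidal functor. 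I would then left Kan extend $\F^\star_\B$ from $\CAlg_k^\poly$ to $\sCAlg_k$ in $\widehat{\DBF}(k)$.

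Next I would identify graded pieces. The smooth-case computation of Section~\ref{sec:smooth}, together with the displayed formula preceding the theorem for $\gr^t\pi_{2u}^\B$, shows that $\gr^u_\B\HC^-(R/k)\we\Omega^{\bullet\geq u}_{R/k}[2u]$ for smooth $R$, with both residual filtrations (HKR and CW) inducing the Hodge filtration. Since $\gr^u\colon\widehat{\DBF}(k)\to\D(k)$ commutes with sifted colimits, as in the proof of Lemma~\ref{lem:comp}, Kan-extending gives
$$\gr^u_\B\HC^-(R/k)\we\widehat{\L\Omega}^{\geq u}_{R/k}[2u]$$
for general $R\in\sCAlg_k$, with both residual filtrations specializing to the Hodge filtration on the right-hand side via the identifications $\gr^t_{\HKR}\we\Lambda^t\L_{R/k}[t]$.

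Completeness of the underlying filtration follows formally: by Lemma~\ref{lem:basics}(i) applied to the HKR axis, $\F^u_\B$ is bicomplete for each $u$, and evaluating at position $(0,0)$, the cofiber of $\F^u_\B\HC^-(R/k)\to\F^{u-1}_\B\HC^-(R/k)$ is increasingly connective as $u\to\infty$, which yields $\lim_u\F^u_\B\HC^-(R/k)\we 0$.

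The main obstacle is exhaustiveness under the Tor-amplitude hypothesis. In the finitely generated polynomial case, Lemma~\ref{lem:basics}(ii) supplies exhaustiveness directly, but Kan-extending in $\widehat{\DBF}(k)$ involves bicompletion, which does not commute with colimits and can destroy exhaustiveness. The Tor-amplitude hypothesis enters through the connectivity of the Kan-extended HKR graded pieces: if $\L_{R/k}$ has Tor-amplitude in $[0,1]$, then $\Lambda^t\L_{R/k}$ has Tor-amplitude in $[0,t]$, so the HKR graded pieces $\Lambda^t\L_{R/k}[t]$ sit in degrees $[t,2t]$. I expect that this uniform connectivity bound allows one to argue, degree by degree in $\H_*\HC^-(R/k)$, that the fiber of $\F^u_\B\HC^-(R/k)\to\HC^-(R/k)$ vanishes on $\H_n$ for $u$ sufficiently negative, which establishes $\colim_u\F^u_\B\HC^-(R/k)\we\HC^-(R/k)$.
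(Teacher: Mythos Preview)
Your overall strategy matches the paper's: define $\F^u_\B$ on polynomial algebras via the double-speed Beilinson Whitehead tower in the HKR direction, left Kan extend in $\widehat{\DBF}(k)$, and identify graded pieces by commuting $\gr^u$ with sifted colimits. The identification of $\gr^u_\B$ is right, and your exhaustiveness sketch points in the correct direction, though it should be made precise exactly as the paper does: work with the \emph{cofiber} $C^u$ of $\F^u_\B\HC^-(R/k)\to\HC^-(R/k)$, compute its bifiltered graded pieces, and use the Tor-amplitude bound to see that each $\gr^t_{\HKR}\gr^s_{\CW}C^u$ lies in degrees $<2u$, whence $C^u$ (a limit of such) satisfies $\pi_iC^u=0$ for $i\geq 2u$ and $\colim_{u\to-\infty}C^u\we 0$.

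There is, however, a genuine gap in your completeness argument for the underlying filtration. You assert that after evaluating at $(0,0)$ the cofiber $\gr^{u-1}_\B\HC^-(R/k)\we\widehat{\L\Omega}^{\geq u-1}_{R/k}[2(u-1)]$ is increasingly connective as $u\to\infty$. This is not justified and is generally false: the Hodge-graded pieces are $\Lambda^t\L_{R/k}[2(u-1)-t]$ for $t\geq u-1$, whose connectivity \emph{decreases} without bound as $t$ grows, so the Hodge-completion need not be bounded below at all. Even granting increasing connectivity of graded pieces, that alone does not force $\lim_u\F^u_\B\we 0$. The paper instead argues entirely inside $\widehat{\DBF}(k)$: one checks $\lim_u\F^u_\B\we 0$ on each $\gr^t_{\HKR}\gr^s_{\CW}$, where the object is $(2u-t)$-connective by the very definition of the Beilinson cover, and then observes that evaluation at $(0,0)$ from $\widehat{\DBF}(k)$ to $\D(k)$ preserves limits (it is the composite of the inclusion $\widehat{\DBF}(k)\hookrightarrow\DBF(k)$ with evaluation, both right adjoints). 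This is precisely why the bifiltered setup is not optional.

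A smaller point: your appeal to Lemma~\ref{lem:basics}(i) ``applied to one axis'' only yields completeness of $\F^u_\B$ in the HKR direction. Bicompleteness requires a separate argument in the CW direction; in the polynomial case the paper observes that $\F^u_\B\F^\star_{\HKR}\F^s_{\CW}\HC^-(R/k)$ vanishes once $2s>2d-u$ where $d$ is the number of variables, so CW-completeness is immediate there.
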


\begin{proof}
    When $R/k$ is a finitely generated polynomial algebra,
    we take as our filtration $\F^\star_{\B}$ the double-speed Whitehead
    filtration $\tau_{\geq 2\star}^\B\F^\star_{\HKR}\F^s_{\CW}\HC^-(R/k)$ in the
    Beilinson $t$-structure.
    By definition of the Beilinson $t$-structure and the analysis in the paragraph above,
    $\pi_{2u}^\B\F^\star_{\HKR}\F^s_{\CW}\HC^-(R/k)$ is a chain complex of the form
    $$0\rightarrow\Omega^{u+s}_{R/k}\rightarrow\Omega^{u+s+1}_{R/k}\rightarrow\cdots,$$
    where $\Omega^{u+s}_{R/k}$ sits in homological degree $u-s$.
    Thus, as in Section~\ref{sec:smooth}, for $R$ smooth,
    $$\gr_\B^u\F^\star_{\HKR}\F^s_{\CW}\HC^-(R/k)\we\pi_{2u}^\B\F^\star_{\HKR}\F^s_{\CW}\HC^-(R/k)\we\Omega^{\bullet\geq {u+s}}_{R/k}[2u].$$
    Both the CW filtration and the HKR filtration induce the Hodge filtration
    on this graded piece.

    We claim that for $R/k$ a finitely generated polynomial algebra on $d$
    variables, for each $u$, the
    bifiltered spectrum $\F^u_{\B}\F^\star_{\HKR}\F^\star_{\CW}\HC^-(R/k)$ is
    bicomplete. For each $s$, this follows from Lemma~\ref{lem:basics}.
    In the other direction, as soon as $2s > 2d-u$, Equation~\ref{eq:1} shows
    that $\F^u_\B\F^\star_{\HKR}\F^s_{\CW}\HC^-(R/k)\we 0$, so completeness in the
    $\CW$-direction is immediate.

    We now view the filtration $\F^\star_\B$ as giving us a functor
    $\CAlg_k^{\mathrm{poly}}\rightarrow\Fun(\ZZ^\op,\widehat{\DBF}(k))$, which we left Kan
    extend to a functor $\sCAlg_k\rightarrow\Fun(\ZZ^\op,\widehat{\DBF}(k))$.
    We verify the necessary properties in a series of lemmas.

    \begin{lemma}
        For any $R\in\sCAlg_k$,
        $$\colim_{u\rightarrow-\infty}\F^u_\B\F^\star_{\HKR}\F^\star_{\CW}\HC^-(R/k)\we\F^\star_{\HKR}\F^\star_{\CW}\HC^-(R/k),$$
        where the colimit is computed in $\widehat{\DBF}(k)$.
    \end{lemma}

    \begin{proof}
        The colimit functor $\Fun(\ZZ^\op,\widehat{\DBF}(k))\rightarrow\widehat{\DBF}(k)$
        commutes with colimits, so this follows from Lemma~\ref{lem:comp} once we show
        that the filtration $\F^u_\B\F^\star_{\HKR}\F^\star\HC^-(R/k)$ is
        exhaustive on $\F^\star_{\HKR}\F^\star_{\CW}\HC^-(R/k)$ for $R$ finitely
        generated polynomial. This follows from Lemma~\ref{lem:basics}.
    \end{proof}

    \begin{lemma}\label{lem:completeness}
        We have
        $\lim_u\F^u_\B\F^\star_{\HKR}\F^\star_{\CW}\HC^-(R/k)\we 0$, where the
        limit is computed in $\widehat{\DBF}(k)$.
    \end{lemma}

    \begin{proof}
        By conservativity of the limit-preserving functors
        $\gr^t\gr^s\colon\widehat{\DBF}(k)\rightarrow\D(k)$, it is enough to see
        that $$\lim_u\gr^t\tau_{\geq
        2u}^\B\F^\star_{\HKR}\gr^s_{\CW}\HC^-(R/k)\we 0$$ for all
        pairs $(s,t)$. But, this object is $(2u-t)$-connective by definition of the
        Beilinson $t$-structure and because of the fact that colimits of
        $(2s-t)$-connective objects are $(2s-t)$-connective. Thus, the limit
        vanishes.
    \end{proof}

    \begin{lemma}
        The graded piece $\gr_\B^u\HC^-(R/k)$ is the bicomplete bifiltered
        object obtained by left Kan extending $R\mapsto\Omega^{\geq u}[2u]$ to all
        simplicial commutative rings, where the filtration is given by
        $\F^{(s,t)}\Omega^{\geq u}[2u]\we\Omega^{\geq u+\max(s-u,t-u,0)}[2u]$.
    \end{lemma}

    \begin{proof}
        Indeed,
        this is clear on finitely generated polynomial algebras by
        Section~\ref{sec:smooth} so this follows by Kan extension using the fact that
        $\gr^u\colon\Fun(\ZZ^\op,\widehat{\DBF}(k))\rightarrow\widehat{\DBF}(k)$
        commutes with colimits.
    \end{proof}

    Thus, we have proved the theorem except for the last sentence.
    Now, we examine the underlying filtration $\F^\star_\B\HC^-(R/k)$ on
    $\HC^-(R/k)$ given by forgetting the HKR and CW filtrations.

    \begin{lemma}\label{lem:mc}
        Let $\widehat{\DBF}(k)\rightarrow\D(k)$ be the functor that sends a
        bicomplete $\NN^\op\times\NN^\op$-index bifiltered spectrum $X(\star,\star)$ to $X(0,0)$.
        This functor preserves limits.
    \end{lemma}

    \begin{proof}
        It is the composition of
        the inclusion functor $\widehat{\DBF}(k)\rightarrow\DBF(k)$ (a right
        adjoint), and the limit preserving evaluation functor
        $X(\star,\star)\mapsto X(0,0)$ on $\DBF(k)$.
    \end{proof}

    From Lemmas~\ref{lem:completeness} and~\ref{lem:mc}, it follows that the filtration $\F^u_\B\HC^-(R/k)$ is a complete
    filtration on $\HC^-(R/k)$. Exhaustiveness is somewhat subtle.

    \begin{lemma}
        If $\L_{R/k}$ has Tor-amplitude contained in $[0,1]$, then the filtration $\F^\star_\B\HC^-(R/k)$ on
        $\HC^-(R/k)$ is exhaustive.
    \end{lemma}

    \begin{proof}
        Consider the cofiber $C^u$ of
        $\F^u_\B\HC^-(R/k)\rightarrow\HC^-(R/k)$ in $\widehat{\DBF}(k)$.
        We find that
        $$\gr^t_{\HKR}\gr^s_{\CW}\F^u_\B\HC^-(R/k)\we\begin{cases}0&\text{if
        $u>t-s$,}\\\Lambda^t\L_{R/k}[t-2s]&\text{otherwise.}\end{cases}$$
        Similarly,
        $\gr^t_{\HKR}\gr^s_{\CW}\HC^-(R/k)\we\Lambda^t\L_{R/k}[t-2s]$. It
        follows that
        $$\gr^t_{\HKR}\gr^s_{\CW}C^u\we\begin{cases}\Lambda^t\L_{R/k}[t-2s]&\text{if
        $t-s<u$,}\\0&\text{otherwise.}\end{cases}$$
        Since $\L_{R/k}$ has Tor-amplitude contained in $[0,1]$,
        $\Lambda^t\L_{R/k}$ has Tor-amplitude contained in $[0,t]$\footnote{Use that
        $\L_{R/k}$ is quasi-isomorphic to a complex $M_0\leftarrow M_1$
        where $M_0,M_1$ are flat, the fact that flats are
        filtered colimits of finitely generated projectives, the
        standard filtration on $\Lambda^t\L_{R/k}$ with graded pieces
        $\Lambda^jM_0\otimes_R\Lambda^{t-j}(M_1[1])$, and the fact that
        $\Lambda^{t-j}(M_1[1])\we(\Gamma^{t-j}M_1)[t-j]$, where $\Gamma^{t-j}$ is
        the divided power functor on flats.} and hence
        $\Lambda^t\L_{R/k}[t-2s]$ has Tor-amplitude contained in $[t-2s,2t-2s]$. In
        particular, we see that $C^u$ has a complete filtration with graded
        pieces having Tor-amplitude in $[t-2s,2t-2s]$ for $t-s<u$. In
        particular, since $R$ is discrete, the graded pieces are
        $2u$-coconnected. Since $C^u$ is a limit of $2u$-coconnected objects,
        it follows that $\pi_iC^u=0$ for $i\geq 2u$. In particular,
        $\colim_{u\rightarrow-\infty}C^u=0$ and the filtration is exhaustive as
        claimed.
    \end{proof}

    This completes the proof.
\end{proof}

Now, we give the argument for $\HP(R/k)$.

\begin{corollary}\label{cor:hp}
    There is a complete filtration $\F^\star_\B\HP(R/k)$ on $\HP(R/k)$ with
    $\gr^u_\B\HP(R/k)\we\widehat{\L\Omega}_{R/k}[2u]$. If $R/k$ is
    quasi-lci, the filtration is exhaustive.
\end{corollary}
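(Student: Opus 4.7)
The plan is to mirror the proof of Theorem~\ref{thm:bi}, replacing homotopy fixed points $(-)^{hS^1}$ by the Tate construction $(-)^{tS^1}$ throughout. Starting from the HKR filtration on $\HH(R/k)$, I would construct a bifiltered object $\F^\star_{\HKR}\F^\star_{\CW}\HP(R/k)$ using the HKR filtration and a cellular filtration appropriate to the Tate construction. The main new wrinkle is that, since $(-)^{tS^1}$ contains both positive and negative powers of the Bott element, the CW filtration is naturally $\ZZ^{\op}$-indexed rather than $\NN^{\op}$-indexed, so the bifiltration lives in a variant of $\widehat{\DBF}(k)$ where the CW direction is indexed by $\ZZ^{\op}$.

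Next, take the double-speed Beilinson Whitehead tower in the HKR direction. For $R$ a finitely generated polynomial $k$-algebra, the analysis proceeds exactly as in Section~\ref{sec:smooth} and in Example~\ref{ex:circle}, except that the $u$-th Beilinson graded piece now captures the \emph{untruncated} de Rham complex $\Omega^\bullet_{R/k}[2u]$: because $s$ can now be made arbitrarily negative, the colimit over $s \in \ZZ$ removes the lower bound $\bullet \geq u+s$ on the Hodge range that was present in the $\HC^-$ case. Left Kan extending from $\CAlg_k^{\poly}$ to $\sCAlg_k$ inside this bicomplete bifiltered category then produces the asserted equivalence $\gr^u_\B\HP(R/k) \we \widehat{\L\Omega}_{R/k}[2u]$.

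Completeness of $\F^\star_\B\HP(R/k)$ follows formally from the analogs of Lemmas~\ref{lem:completeness} and~\ref{lem:mc} applied in the $\ZZ$-indexed setting, since both ingredients---Beilinson connective covers of complete filtrations are complete, and evaluation at $(0,0)$ preserves limits---persist unchanged. For exhaustiveness under the quasi-lci hypothesis, the Tor-amplitude bookkeeping from the proof of Theorem~\ref{thm:bi} goes through essentially verbatim: the cofiber $C^u = \cofib(\F^u_\B\HP(R/k) \to \HP(R/k))$ is a limit of objects with graded pieces $\Lambda^t\L_{R/k}[t-2s]$ of Tor-amplitude in $[t-2s, 2t-2s]$ in the range $t-s < u$, hence $2u$-coconnected, which forces $\colim_{u \to -\infty} C^u \we 0$. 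The hard part will be setting up the Tate CW filtration and verifying bicompleteness in the $\ZZ$-indexed direction, since the Tate construction does not commute with limits in the clean way that homotopy fixed points do; some care is needed to ensure the left Kan extension interacts correctly with bicompletion, and one may prefer to bootstrap from the cofiber sequence $\Sigma\HC(R/k) \to \HC^-(R/k) \to \HP(R/k)$ rather than construct the Tate bifiltration directly.
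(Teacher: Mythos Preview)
Your primary approach---building a $\ZZ^{\op}$-indexed CW filtration on $\HP$ and rerunning Theorem~\ref{thm:bi}---has a genuine gap in the completeness argument. You claim that the analog of Lemma~\ref{lem:mc} ``persists unchanged,'' but it does not: in the $\NN^{\op}\times\NN^{\op}$ setting, evaluation at $(0,0)$ recovers the underlying object, and that functor preserves limits. In your $\ZZ^{\op}$-indexed CW setting, evaluation at $(0,0)$ only returns $\F^0_{\CW}\HP(R/k)$, not $\HP(R/k)$ itself; to extract $\HP(R/k)$ you must take $\colim_{s\to-\infty}$ in the CW direction, and that colimit has no reason to commute with the limit $\lim_u$ needed to verify completeness of $\F^\star_\B\HP(R/k)$. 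The same issue contaminates your exhaustiveness argument: identifying the graded pieces of $C^u$ in the bicomplete category does not by itself control the underlying cofiber in $\D(k)$ once a colimit is required to produce it. You flag this concern at the end, but the body of the proposal treats it as settled.

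The paper does exactly what you suggest as a fallback: it bootstraps from the cofiber sequence $\HC(R/k)[1]\to\HC^-(R/k)\to\HP(R/k)$. The point is that $\HC(-/k)=\HH(-/k)_{hS^1}$ commutes with sifted colimits, so no CW bifiltration is needed on that side: the left Kan extended HKR filtration on $\HC(R/k)[1]$ is already complete (by connectivity), and Lemma~\ref{lem:basics} gives a complete exhaustive Beilinson filtration with $\gr^u_\B\HC(R/k)[1]\simeq\L\Omega^{\leq u-1}_{R/k}[2u-1]$. Defining $\F^\star_\B\HP$ as the cofiber of $\F^\star_\B\HC[1]\to\F^\star_\B\HC^-$ then gives completeness and (in the quasi-lci case) exhaustiveness for free from Theorem~\ref{thm:bi}, and the graded pieces are read off from the stupid-truncation cofiber sequence $\widehat{\L\Omega}^{\geq u}_{R/k}\to\widehat{\L\Omega}_{R/k}\to\L\Omega^{\leq u-1}_{R/k}$. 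This route avoids the $\ZZ$-indexed completion issues entirely.
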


\begin{proof}
    We use the cofiber sequence
    $\HC(R/k)[1]\rightarrow\HC^-(R/k)\rightarrow\HP(R/k)$. Note that
    $\HC(-/k)=\HH(R/k)_{hS^1}$ preserves colimits. The Kan extension of the HKR
    filtration on $\HC(-/k)[1]$ to from finitely generated polynomial $k$-algebras
    to all simplicial commutative $k$-algebras thus equips $\HC(-/k)[1]$ with an
    $\NN$-indexed filtration $\F^\star_{\HKR}\HC(-/k)[1]$  with graded pieces
    $\gr^n_\HKR\HC(-/k)[1]\we\Lambda^n\L_{R/k}[n+1]$. Moreover, since
    $\F^n_{\HKR}\HC(-/k)[1]$ is $n$-connective, the filtration is complete. By
    Lemma~\ref{lem:basics}, the double-speed Beilinson Whitehead tower induces
    a complete exhaustive decreasing $\ZZ$-indexed filtration
    $\F^\star_\B\HC(-/k)[1]$ on $\HC^-(-/k)[1]$. A straightforward check implies that the graded pieces are
    $\gr^u_\B\HC(-/k)[1]\we\L\Omega^{\leq u-1}_{R/k}[2u-1]$. Here, it makes no
    difference whether we take the Hodge-completed derived de Rham complex or
    the non-Hodge-completed derived de Rham complex, as the Hodge filtration on
    $\L\Omega^{\leq u-1}_{R/k}$ is finite. Now, we have a cofiber sequence
    $\F^\star_\B\HC(-/k)[1]\rightarrow\F^\star_\B\HC^-(R/k)\rightarrow\F^\star_\B\HP(R/k)$.
    Since the filtrations on $\HC(-/k)$ and $\HC^-(R/k)$ are complete, so is
    the induced filtration on $\HP(R/k)$. When $R/k$ is quasi-lci, Theorem~\ref{thm:bi}
    implies that the filtration on $\HC^-(R/k)$ is exhaustive. We have already
    noted that the filtration on $\HC(R/k)$ is exhaustive. Hence, the
    filtration on $\HP(R/k)$ is exhaustive. The graded pieces
    $\gr^u_\B\HP(R/k)$ fit into cofiber sequences $$\widehat{\L\Omega}^{\geq
    u}_{R/k}[2u]\rightarrow\gr^u_\B\HP(R/k)\rightarrow\L\Omega^{\leq
    u-1}_{R/k}[2u].$$ One finds using the remaining HKR filtration that in the
    smooth case $\gr^u_\B\HP(R/k)[-2u]$ is a chain complex (it is in the heart of
    the Beilinson $t$-structure) and that this sequence is equivalent to the
    canonical stupid filtration sequence
    $$0\rightarrow\Omega_{R/k}^{\bullet\geq
    u}\rightarrow\Omega_{R/k}^{\bullet}\rightarrow\Omega_{R/k}^{\bullet\leq
    u-1}\rightarrow 0.$$ This completes the proof since now we see in general
    that $\gr^u_\B\HP(R/k)\we\widehat{\L\Omega}_{R/k}[2u]$.
\end{proof}

\begin{proof}[Proof of Theorem~\ref{thm:intro}]
    Theorem~\ref{thm:bi} and Corollary~\ref{cor:hp} establish the theorem for
    affine $k$-schemes. It follows for general quasi-compact separated schemes
    because everything in sight is then computed from a finite limit of affine
    schemes, and the conditions of being complete or exhaustive are stable
    under finite limits. Finally, it follows for a quasi-compact quasi-separated
    scheme $X$ by induction on the number of affines needed to cover $X$.
\end{proof}


\addcontentsline{toc}{section}{References}

\begin{bibdiv}
\begin{biblist}

\bib{an1}{article}{
    author={Antieau, Benjamin},
    author={Nikolaus, Thomas},
    title={Cartier modules and cyclotomic spectra},
    note={In preparation},
    year={2018},
}

\bib{beilinson-perverse}{article}{
    author={Be\u\i linson, A. A.},
    title={On the derived category of perverse sheaves},
    conference={
    title={$K$-theory, arithmetic and geometry},
    address={Moscow},
    date={1984--1986},
    },
    book={
    series={Lecture Notes in
    Math.},
    volume={1289},
    publisher={Springer,
    Berlin},
    },
    date={1987},
    pages={27--41},
}

%
%
%
\bib{bms2}{article}{
    author={Bhatt, Bhargav},
    author={Morrow, Mathew},
    author={Scholze, Peter},
    title={Topological Hocschild homology and integral $p$-adic Hodge theory},
    journal = {ArXiv e-prints},
    eprint =  {http://arxiv.org/abs/1802.03261},
    year = {2018},
}

\bib{gwilliam-pavlov}{article}{
    author={Gwilliam, Owen},
    author={Pavlov, Dmitri},
    title={Enhancing the filtered derived category},
    journal={J. Pure Appl. Algebra},
    volume={222},
    date={2018},
    number={11},
    pages={3621--3674},
    issn={0022-4049},
}

%
%
\bib{hesselholt-ptypical}{article}{
    author={Hesselholt, Lars},
    title={On the $p$-typical curves in Quillen's $K$-theory},
    journal={Acta Math.},
    volume={177},
    date={1996},
    number={1},
    pages={1--53},
    issn={0001-5962},
}
%
%
%
\bib{hochschild-kostant-rosenberg}{article}{
    author={Hochschild, G.},
    author={Kostant, Bertram},
    author={Rosenberg, Alex},
    title={Differential forms on regular affine algebras},
    journal={Trans. Amer. Math. Soc.},
    volume={102},
    date={1962},
    pages={383--408},
    issn={0002-9947},
}
\bib{loday}{book}{
    author={Loday, Jean-Louis},
    title={Cyclic homology},
    series={Grundlehren der Mathematischen Wissenschaften},
    volume={301},
    note={Appendix E by Mar\'\i a O. Ronco},
    publisher={Springer-Verlag, Berlin},
    date={1992},
    pages={xviii+454},
    isbn={3-540-53339-7},
}
%
\bib{htt}{book}{
      author={Lurie, Jacob},
       title={Higher topos theory},
      series={Annals of Mathematics Studies},
   publisher={Princeton University Press},
     address={Princeton, NJ},
        date={2009},
      volume={170},
        ISBN={978-0-691-14049-0; 0-691-14049-9},
}
\bib{ha}{article}{
    author={Lurie, Jacob},
    title={Higher algebra},
    date={2017},
    eprint={http://www.math.harvard.edu/~lurie/},
    note={Version dated 18 September 2017},
}

\bib{toen-vezzosi-s1}{article}{
    author={To\"en, Bertrand},
    author={Vezzosi, Gabriele},
    title={Alg\`ebres simpliciales $S^1$-\'equivariantes, th\'eorie de de Rham et th\'eor\`emes HKR multiplicatifs},
    journal={Compos. Math.},
    volume={147},
    date={2011},
    number={6},
    pages={1979--2000},
    issn={0010-437X},
}

\end{biblist}
\end{bibdiv}

\vspace{20pt}
\scriptsize
\noindent
Benjamin Antieau\\
University of Illinois at Chicago\\
Department of Mathematics, Statistics, and Computer Science\\
851 South Morgan Street, Chicago, IL 60607\\
USA\\
\texttt{benjamin.antieau@gmail.com}
%

\end{document}